\documentclass[11pt]{amsart}
\usepackage[linkcolor=blue, citecolor = blue]{hyperref}
\hypersetup{colorlinks=true}

\usepackage{amsthm,amsfonts,amsmath,amssymb,amscd} 

\usepackage{verbatim}
\usepackage{float}
\usepackage{wrapfig}
\usepackage{mathtools}
\usepackage[color=orange!20, linecolor=orange]{todonotes}

\usepackage{enumitem}

\usepackage{ytableau}

\usepackage{tikz}
\usetikzlibrary{decorations.markings}
\tikzstyle{vertex}=[circle, draw, inner sep=0pt, minimum size=4pt]

\usetikzlibrary{arrows,automata}
\usepackage{tikz, tikz-3dplot, pgfplots}
\usetikzlibrary[positioning,patterns]

\newtheorem{theorem}{Theorem}

\newtheorem{lemma}[theorem]{Lemma}
\newtheorem{corollary}[theorem]{Corollary}
\newtheorem{conjecture}[theorem]{Conjecture}

\theoremstyle{definition}

\theoremstyle{remark}
\newtheorem{remark}[theorem]{Remark}

\newcommand{\sgn}{\mathrm{sgn}}

\newcommand{\srank}{\mathrm{slice}\text{-}\mathrm{rank}}

\setlength{\textwidth}{6.3in}
\setlength{\oddsidemargin}{0.1in}
\setlength{\evensidemargin}{0.1in}
\setlength{\topmargin}{-0.0in}
\setlength{\textheight}{8.7in}


\title[Saturation of Rota's basis conjecture]
{Saturation of Rota's basis conjecture} 

\author[Damir Yeliussizov]{Damir Yeliussizov}

\address{KBTU, Almaty, Kazakhstan}
\email{\href{mailto:yeldamir@gmail.com}{yeldamir@gmail.com}}


\begin{document}
\begin{abstract}
We prove an asymptotic saturation-type version of Rota's basis conjecture. It relies on the connection of Tao's slice rank with unstable tensors from geometric invariant theory. 
\end{abstract}

\maketitle


\section{Introduction}
The basis conjecture, stated as Problem 1 in Rota's article 
``Ten mathematics problems I will never solve"  
\cite{rot10}, is the following.
Let $V$ be an $n$-dimensional vector space over a field. 
\begin{conjecture}
\label{rota}
Let $B_1,\ldots, B_n$ be $n$ bases of $V$. There is $n \times n$ matrix $A$ such that: 
\begin{itemize}
\item  in the $i$-th row of $A$ each element of $B_i$ appears one time, for $i =1, \ldots, n$ 
\item every column of $A$ forms a basis of $V$.\footnote{To be precise, each entry of $A$ is a vector in $V$.}
\end{itemize}
\end{conjecture}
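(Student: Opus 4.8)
Conjecture \ref{rota} is a well-known open problem, so what follows is a plan of attack rather than a proof: I would pursue the route that currently gets closest --- through the Alon--Tarsi conjecture --- and indicate how the slice-rank / geometric-invariant-theory tools of this paper could fit into it. First, some standard reductions. Since a determinant with entries in a field vanishes there iff it vanishes in any extension, it suffices to prove the conjecture over an algebraically closed field; and after one linear change of coordinates we may assume $B_1 = (e_1, \dots, e_n)$ is the standard basis, writing the remaining bases as invertible matrices $M_2, \dots, M_n$, the $k$-th row of $M_i$ being the $k$-th vector $b^{(i)}_k$ of $B_i$. A candidate array is then the data of permutations $\pi_2, \dots, \pi_n \in S_n$ (set $\pi_1 = \mathrm{id}$): cell $(i,j)$ carries $b^{(i)}_{\pi_i(j)}$, so every row of the array automatically rearranges the corresponding basis, and the requirement becomes $\det\!\big(b^{(1)}_{\pi_1(j)}, \dots, b^{(n)}_{\pi_n(j)}\big) \neq 0$ for each column $j$.

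Next I would invoke the classical reduction of this problem to a Latin-square sign count. Let $AT(n) := \#\{\text{even order-}n\text{ Latin squares}\} - \#\{\text{odd ones}\}$ be the Alon--Tarsi number. By the theorem of Alon and Tarsi, with refinements by Huang--Rota and by Onn --- whose proof goes through a determinantal polynomial whose controlling coefficient specialises, when the bases are taken diagonal, to a nonzero scalar multiple of $AT(n)$, combined with the Combinatorial Nullstellensatz, or, alternatively, through a straightening-coefficient identity in a letter-place algebra --- one has: if $AT(n) \neq 0$, then Conjecture \ref{rota} holds for $n$, at least over every field of characteristic $0$, and more generally whenever $\operatorname{char}\mathbb{F} \nmid AT(n)$. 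Since $AT(n) = 0$ identically for odd $n$ (transposing two rows is a fixed-point-free, sign-reversing involution on order-$n$ Latin squares), this handles only even $n$; the odd case of Conjecture \ref{rota} would then need a separate argument, e.g. a direct matroid-union/augmentation scheme, or a transfer from the even cases $n \pm 1$. In any event, for even $n$ the problem reduces to the \emph{Alon--Tarsi conjecture}: $AT(n) \neq 0$.

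This is the main obstacle. The Alon--Tarsi conjecture is at present known only for $n \in \{p-1, p+1\}$ with $p$ an odd prime (Drisko; Glynn), via permanent evaluations over $\mathbb{F}_p$, and by computation for small $n$; proving it for all even $n$ is the real content. Here I would attack $AT(n)$ using exactly the dictionary of this paper: realise $AT(n)$, or a polynomial deformation of it, as the value of a Tao-style slice rank --- equivalently, as the dimension of a space of semi-invariants for a reductive group acting on a tensor space --- so that non-vanishing becomes the existence of an explicit \emph{unstable}-tensor witness, i.e. a one-parameter subgroup certifying that the corresponding invariant does not vanish on the orbit. In these terms the asymptotic saturation statement proved in this paper --- for all large $N$, an $n \times Nn$ array using each vector of $B_i$ exactly $N$ times with every column a basis --- corresponds to taking the $N$-th power of the relevant monomial, i.e. to a large representation-theoretic weight, where the invariant is comfortably nonzero; so the whole difficulty of the full conjecture is the \emph{descent} of this non-vanishing down to the critical weight $N = 1$, that is, controlling the cancellation in $AT(n)$ itself rather than in its far more robust large-$N$ analogue. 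As a hedge, one might also try to marry such an algebraic non-vanishing statement to the probabilistic ``nibble'' constructions of Buci\'{c}, Kwan, Pokrovskiy and Sudakov, which already yield $(\tfrac12 - o(1))n$ pairwise-disjoint colourful bases --- but getting the two halves to interact is, once again, exactly the hard part.
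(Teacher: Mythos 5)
The statement you were asked to prove is Rota's basis conjecture itself, which is open, and the paper does not claim to prove it: the paper's contribution is Theorem~\ref{one}, the ``saturation'' version in which each basis vector may be used $\ell$ times for some $\ell \ge 1$. You correctly recognize this, so there is no gap to flag --- only a comparison to make. Your survey of the classical route is accurate: reduce to a single sign-count $AT(n)$ (Alon--Tarsi, Huang--Rota, Onn), observe $AT(n)=0$ for odd $n$ via the row-swap involution, and note the only known cases $n = p \pm 1$ (Drisko, Glynn). Your high-level reading of the paper's mechanism is also essentially right, though two details are worth sharpening. First, the paper proves existence of \emph{some} $\ell$, not ``for all large $N$''; semistability of $E_n$ only guarantees that \emph{some} $\mathrm{SL}(n)^n$-invariant $P_{M,\vec\pi}$ with $M = \ell n$ does not vanish on $E_n$, after which Lemma~\ref{pgl} transports nonvanishing to $D(B_1,\dots,B_n)$ and a nonzero monomial in the expansion of $P_{M,\vec\pi}(D)$ yields the $n \times M$ array. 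Second, the key technical input is not a power of a monomial but the slice-rank computation $\srank(E_n^{\otimes k}) = n^k$ (Lemma~\ref{five}), fed into the criterion of \cite{widg} characterizing unstable tensors via non-full slice rank of tensor powers; the Alon--Tarsi conjecture is then precisely the statement that the \emph{minimal-degree} invariant $P_{n,\vec\pi}$ (Cayley's hyperdeterminant) already witnesses semistability. So your framing --- that Theorem~\ref{one} is the ``robust high-weight'' statement and the full conjecture is a descent to weight $\ell = 1$ --- matches the paper's own remarks, and the genuine difficulty you identify (cancellation in $AT(n)$ at the critical degree) is exactly what remains.

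Two minor corrections to your setup: the paper encodes basis vectors as \emph{columns} of the $B_i \in \mathrm{GL}(n)$, not rows, and the normalization $B_1 = I$ is harmless but not used --- the relative $\mathrm{GL}$-invariance in Lemma~\ref{pgl} makes it unnecessary.
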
 

In this paper we prove the following 
asymptotic 
version of the basis conjecture 
for $V = \mathbb{C}^n$. 

\begin{theorem}
\label{one}
Let $B_1,\ldots, B_n$ be $n$ bases of $V$. There is $\ell \ge 1$ and $n \times \ell n$ matrix $A$ such that: 
\begin{itemize}
\item in the $i$-th row of $A$ each element of $B_i$ appears $\ell$ times, for $i =1, \ldots, n$ 
\item every column of $A$ forms a basis of $V$. 
\end{itemize}
\end{theorem}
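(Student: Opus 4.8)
The plan is to fractionalize the conclusion, dualize it by Farkas' lemma, and recognize the dual condition as a one-parameter-subgroup instability statement for the determinant tensor attached to the configuration; slice rank is then what rules that instability out. Write $B_i=(b_{i,1},\dots,b_{i,n})$ and $D_{j_1,\dots,j_n}=\det(b_{1,j_1},\dots,b_{n,j_n})$, and let $S=\{(j_1,\dots,j_n):D_{j_1,\dots,j_n}\ne 0\}\subseteq[n]^n$ be its support. A matrix $A$ as in the statement is the same data as a multiset of $\ell n$ tuples taken from $S$ that uses each value $k$ exactly $\ell$ times in every coordinate $i$; since the ambient polytope is rational, existence of such an $A$ for \emph{some} $\ell\ge 1$ is equivalent to nonemptiness of the polytope of probability vectors $(c_s)_{s\in S}$ with $\sum_{s:\,s_i=k}c_s=\tfrac1n$ for all $i,k$ — a ``fractional Rota'' condition depending only on $S$.

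By Farkas' lemma this polytope is empty precisely when there is a vector $y=(y_{i,k})$ with $\sum_k y_{i,k}=0$ for every $i$ and $\sum_i y_{i,s_i}>0$ for every $s\in S$. But such a $y$ is exactly the weight data of a one-parameter subgroup $\lambda(t)$ of $SL_n^{\times n}$ acting on $(\mathbb C^n)^{\otimes n}$ by $\mathrm{diag}(t^{y_{i,1}},\dots,t^{y_{i,n}})$ on the $i$-th tensor factor: the coordinate $D_{j_1,\dots,j_n}$ is multiplied by $t^{\sum_i y_{i,j_i}}$, so emptiness of the polytope means exactly $\lim_{t\to0}\lambda(t)\cdot D=0$, i.e.\ $D$ is destabilized by a torus one-parameter subgroup. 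Hence the conclusion of the theorem holds for the given bases as soon as $D$ is semistable for $SL_n^{\times n}$.

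To remove the dependence on the configuration, observe that if $M_i$ is the matrix with columns $b_{i,1},\dots,b_{i,n}$, then $D=(N_1\otimes\cdots\otimes N_n)\cdot\det_n$ for suitable $N_i\in GL_n$ manufactured from the $M_i$, where $\det_n=\sum_{\sigma\in S_n}\sgn(\sigma)\,e_{\sigma(1)}\otimes\cdots\otimes e_{\sigma(n)}\in(\mathbb C^n)^{\otimes n}$ is the generic determinant tensor. Thus $D$ always lies in the $GL_n^{\times n}$-orbit of $\det_n$; since semistability is $GL_n^{\times n}$-invariant, it is enough to show $\det_n$ is semistable for $SL_n^{\times n}$. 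At this point I would invoke the connection between Tao's slice rank and GIT instability: a tensor in $(\mathbb C^n)^{\otimes n}$ is semistable for $SL_n^{\times n}$ if and only if its slice rank equals $n$. The whole theorem therefore reduces to the single statement $\srank(\det_n)=n$.

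The bound $\srank(\det_n)\le n$ is Laplace expansion along one factor. For $\srank(\det_n)\ge n$ I would confront a hypothetical slice decomposition $\det_n=\sum_{a=1}^{r}\ell_a\otimes\Psi_a$ with $r<n$: evaluating at vectors $v_1,\dots,v_n$ and using that, when $v_2,\dots,v_n$ are linearly independent, $v_1\mapsto\det(v_1,\dots,v_n)$ is a nonzero linear form whose kernel $\operatorname{span}(v_2,\dots,v_n)$ runs over all hyperplanes, one traces the $v_1$-dependence through the decomposition and is forced to conclude that some $\ell_a$ vanishes identically, a contradiction once the rank and ``which factor'' bookkeeping is made precise. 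I expect this last step to be the main obstacle — it is transparent for small $n$, but pushing the count through uniformly is delicate because of the slices supported on the ``middle'' factors; failing a clean direct argument, one can fall back on the classical polystability of the determinant tensor (its stabilizer in $SL_n^{\times n}$ is reductive, hence its orbit is closed and avoids the origin), which gives semistability, and so $\srank(\det_n)=n$, directly.
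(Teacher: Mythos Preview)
Your reduction is clean and essentially matches the paper's conceptual core: both arguments reduce the theorem to the single fact that the determinant tensor $\det_n=E_n$ is $SL_n^{\times n}$-semistable. Your route via LP duality and the Hilbert--Mumford criterion (polytope nonempty $\Leftrightarrow$ no destabilizing torus $1$-PS for $D$ $\Leftarrow$ $D$ semistable $\Leftrightarrow$ $\det_n$ semistable) is a genuinely different and arguably more transparent packaging than the paper's, which instead extracts an explicit nonvanishing $SL_n^{\times n}$-invariant $P_{M,\vec\pi}$, transports it to $D$ by relative $GL$-invariance, and reads off the matrix $A$ from a nonzero monomial.

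The gap is in the last step, where you prove semistability of $\det_n$. The equivalence you invoke, ``$X$ semistable for $SL_n^{\times n}$ iff $\srank(X)=n$'', is false in the direction you need. What is true (and what the paper quotes from \cite{widg}) is that $X$ is semistable iff $\srank(X^{\otimes k})=n^k$ for \emph{every} $k\ge 1$; full slice rank at $k=1$ is only a necessary condition, since slice rank is not multiplicative under tensor powers. So establishing $\srank(\det_n)=n$, even if your sketched hyperplane argument were made precise, would not finish the proof. The paper closes exactly this gap in its Lemma~\ref{five}, showing $\srank(E_n^{\otimes k})=n^k$ for all $k$ by exhibiting the support of $E_n^{\otimes k}$ as an antichain and producing a large ``diagonal'' subset on which the Sawin--Tao lower bound bites.

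Your fallback is also not valid as stated: a reductive stabilizer does \emph{not} force a closed orbit. The standard counterexample is $\mathbb{C}^\times$ acting on $\mathbb{C}^2$ by $t\cdot(x,y)=(tx,t^{-1}y)$; the orbit of $(1,0)$ has trivial (hence reductive) stabilizer but is not closed. Matsushima's theorem gives the converse implication only. If you want to repair the argument along your lines, the honest fix is to prove $\srank(E_n^{\otimes k})=n^k$ for all $k$ as the paper does; once that is in place, your Farkas/Hilbert--Mumford deduction goes through unchanged.
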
 

This result presents the problem (or can be viewed) 
as a saturation-type\footnote{By analogy with algebraic notions of saturation for monoids or ideals.} question. The proof of Theorem~\ref{one} 
uses recent ideas from tensors and invariant theory. 
It relies on the connection of {\it Tao's slice rank} \cite{tao} with {\it unstable tensors} from geometric invariant theory as developed in \cite{blasiak, widg}. 
We combine this theory with a method similar to \cite{onn} 
relating
the Alon--Tarsi conjecture \cite{at} with Rota's basis conjecture. 

\vspace{0.7em}

Let us briefly summarize some known results on Rota's basis conjecture. 
It is often stated more generally for matroids and is related to several other conjectures \cite{hr}, see also \cite{chow1}.   
The Alon--Tarsi conjecture on latin squares \cite{at}, which is proved for specific 
$n = p \pm 1$ where $p > 2$ is any prime \cite{dri1,  glynn}, implies Conjecture~\ref{rota} \cite{hr, onn}. 
The conjecture is also known to hold for some special classes of matroids, such as 
strongly base-orderable \cite{wild}, paving~\cite{gh}, and matroids of rank at most~$4$ \cite{che}. 
Lower bounds on the number of disjoint transversal\footnote{A  transversal is referred here to a set with at most one element from each $B_i$.} bases 
were obtained 
in \cite{gw, dg}, in \cite{bkps} the bound of $(1/2-o(1))n$ such bases is proved, and in \cite{fm} the bound of $n - o(n)$ bases is given for matroids of large girth. In
\cite{ab} it is shown that $\cup B_i$ can be decomposed into $2n$ transversal independent sets, which was  improved to $2n-2$ in \cite{pm}. In \cite{pok} the result of $n - o(n)$ disjoint transversal independent sets of sizes $n - o(n)$ is obtained.
For matroids, the known methods on the problem are mostly combinatorial. 
For the Alon--Tarsi conjecture, there is an algebraic approach using hyperdeterminants \cite{al}. 
Rota's basis conjecture was also the topic of the Polymath~12 project~\cite{polymath, pm}. 


\section{Tensors}\label{prelim}
We use the notation $[n] := \{ 1,\ldots, n\}$ and $G^d := G \times \cdots \times G$ ($d$ times) for a group $G$.
Let $V = \mathbb{C}^n$. 
{\it Tensors} are elements of the space $V^{\otimes d} = V \otimes \cdots \otimes V$ ($d$ times).  Each tensor of $V^{\otimes d}$ can be represented in coordinates as 
$$
\sum_{1 \le i_1,\ldots, i_d \le n} T(i_1,\ldots, i_d)\, \mathbf{e}_{i_1} \otimes \cdots \otimes \mathbf{e}_{i_d},
$$
where $T : [n]^d \to \mathbb{C}$ which we call a {\it $d$-tensor}, and $(\mathbf{e}_i)$ is the standard basis of $V$. We denote by $\mathsf{T}^d(n) := \{T : [n]^d \to \mathbb{C} \}$ the set of $d$-tensors.

\vspace{0.5em}

Let $A_1,\ldots A_d \in \mathsf{T}^2(n)$ be $n \times n$ matrices and $X \in \mathsf{T}^d(n)$ be a $d$-tensor. The {\it multilinear product} is defined as follows
$$
(A_1,\ldots, A_d) \cdot X = Y \in \mathsf{T}^d(n),
$$
where 
$$
Y(i_1,\ldots, i_d) = \sum_{j_1,\ldots, j_d \in [n]} A(i_1,j_1) \cdots A(i_d, j_d)\, X(j_1,\ldots, j_d).
$$
The multilinear product defines the natural $\mathrm{GL}(V)^d$ action on $\mathsf{T}^d(n)$, 
and simply expresses change of bases of $V$ for a tensor. Note that for matrices $B_1,\ldots, B_d \in \mathsf{T}^2(n)$ we have 
$$
(A_1 B_1, \ldots, A_d B_d) \cdot X = (A_1,\ldots, A_d) \cdot ((B_1,\ldots, B_d) \cdot X).
$$

\vspace{0.5em}

The {\it tensor product} 
of $X \in \mathsf{T}^{d}(n), Y \in \mathsf{T}^d(m)$ is defined as $T = X \otimes Y \in \mathsf{T}^d(nm)$ given by 
$$
T(k_1,\ldots, k_d) = X(i_1,\ldots, i_d) \cdot Y(j_1,\ldots, j_d), \quad k_{\ell} = i_{\ell} (m-1) + j_{\ell}.
$$
Alternatively, we can view the $\ell$-th coordinate of $T$ as a pair $(i_{\ell}, j_{\ell}) \mapsto k_{\ell}$ ordered lexicographically, for $\ell \in [d]$. For $X \in \mathsf{T}^d(n)$, the tensor  $X^{\otimes k} = X \otimes \cdots \otimes X \in \mathsf{T}^d(n^k)$ denotes the $k$-th tensor power of $k$ copies of $X$.

\section{The slice rank}
A nonzero $d$-tensor $T \in \mathsf{T}^d(n)$ has {\it slice rank} $1$ if it can be decomposed in a form
$$
T(i_1,\ldots, i_d) = \mathbf{v}(i_k) \cdot T_1(i_1,\ldots, i_{k-1}, i_{k+1}, \ldots, i_d),
$$
for some $k \in [d]$, a vector $\mathbf{v} \in V$ and a $(d-1)$-tensor $T_1 \in \mathsf{T}^{d-1}(n)$. The {\it slice rank} of $T \in \mathsf{T}^d(n)$, denoted by $\srank(T)$, is then the minimal $r$ such that 
$$
T = T_1 + \ldots + T_r,
$$
where each summand $T_{i}$ has slice rank $1$. (Note that each $T_i$ can be decomposed differently and along different coordinates $k$.) 

For $T \in \mathsf{T}^d(n)$ we have the inequality
$$
\srank(T) \le n,
$$
since $T$ can always be expressed as the sum of slice rank 1 tensors as follows
$$
T(i_1,\ldots, i_d) = \sum_{\ell = 1}^n \delta(i_1,\ell) \cdot T(\ell, i_2,\ldots, i_d),
$$
where $\delta$ is the Kronecker delta function.

The following lemma is useful for finding the slice rank of certain sparse tensors. 
\begin{lemma}[\cite{st}]\label{saw}
Equip the set $[n]$ with $d$ total orderings $\le_i$ for $i \in [d]$, which define the partial order $\le$ on $[n]^d$. Let $T \in \mathsf{T}^d(n)$ whose support $\Gamma = \{(i_1,\ldots, i_d) : T(i_1,\ldots, i_d) \ne 0\}$ is an antichain w.r.t. $\le$. Then 
$$
\srank(T) = \min_{\Gamma = \Gamma_1 \cup \cdots \cup \Gamma_d } |\pi_1(\Gamma_1)| + \ldots + |\pi_d(\Gamma_d)|,
$$
where the minimum is over set partitions $\Gamma = \Gamma_1 \cup \cdots \cup \Gamma_d $ and $\pi_i : [n]^d \to [n]$ is the projection map on the $i$-th coordinate. 
\end{lemma}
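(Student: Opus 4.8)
The plan is to prove the two inequalities separately; only the direction ``$\srank(T) \ge (\cdots)$'' uses that $\Gamma$ is an antichain. For ``$\le$'', fix any set partition $\Gamma = \Gamma_1 \cup \cdots \cup \Gamma_d$ and let $T_i \in \mathsf{T}^d(n)$ agree with $T$ on $\Gamma_i$ and vanish off $\Gamma_i$, so $T = T_1 + \cdots + T_d$. Expanding $T_i$ along its $i$-th coordinate,
\[
T_i(j_1,\ldots,j_d) = \sum_{\ell \in \pi_i(\Gamma_i)} \delta(j_i,\ell)\, T_i(j_1,\ldots,j_{i-1},\ell,j_{i+1},\ldots,j_d),
\]
displays $T_i$ as a sum of $|\pi_i(\Gamma_i)|$ tensors of slice rank $1$, each sliced along coordinate $i$. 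Hence $\srank(T) \le \sum_i |\pi_i(\Gamma_i)|$, and minimizing over partitions gives the bound; this direction uses nothing about the support.

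For ``$\ge$'', I would first restate the right-hand side combinatorially. A partition as above is essentially a family of sets $S_i := \pi_i(\Gamma_i) \subseteq [n]$ that \emph{covers} $\Gamma$, in the sense that every $p \in \Gamma$ has $p_i \in S_i$ for at least one $i$; conversely a covering family induces such a partition, so the right-hand side equals $\min\{\sum_i |S_i| : \{S_i\}_i \text{ covers } \Gamma\}$. Thus it suffices to show that any expression of $T$ as a sum of $r := \srank(T)$ slice rank $1$ tensors yields a covering family with $\sum_i|S_i| \le r$. Grouping the summands by slicing coordinate, write $T = U_1 + \cdots + U_d$ with $U_i(x) = \sum_{m=1}^{r_i} a_{i,m}(x_i)\, b_{i,m}(x_{\widehat{i}})$, where $x_{\widehat{i}}$ denotes the coordinates other than the $i$-th and $\sum_i r_i = r$; after discarding dependencies assume $a_{i,1},\ldots,a_{i,r_i}$ span $W_i \subseteq \mathbb{C}^{[n]}$ with $\dim W_i = r_i$. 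Now pick $S_i$ greedily from the bottom of $\le_i$: scanning $[n]$ in increasing $\le_i$-order, put $s$ into $S_i$ precisely when $\mathrm{ev}_s|_{W_i}$ is not spanned by $\{\mathrm{ev}_{s'}|_{W_i} : s' <_i s\}$. Then $|S_i| = \dim W_i = r_i$, and for every $v \notin S_i$ the functional $\mathrm{ev}_v|_{W_i}$ lies in the span of $\{\mathrm{ev}_s|_{W_i} : s \in S_i,\ s <_i v\}$. It remains to verify that $S_1,\ldots,S_d$ covers $\Gamma$.

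Suppose not: some $p \in \Gamma$ has $p_i \notin S_i$ for all $i$. The antichain hypothesis enters through one remark: \emph{if $q \le p$ componentwise (for the orders $\le_i$) and $q \ne p$, then $T(q)=0$}, since otherwise $q$ and $p$ would be distinct comparable members of $\Gamma$. I would then unfold $T(p)$ by lowering one coordinate at a time. For each $i$ the spanning property yields scalars with $a_{i,m}(p_i) = \sum_{s <_i p_i,\ s\in S_i} c^i_s\, a_{i,m}(s)$ for all $m$, hence $U_i(p) = \sum_s c^i_s\, U_i(p^{i\to s})$, where $p^{i\to s}$ is $p$ with its $i$-th entry replaced by $s$; since $p^{i\to s} < p$ we get $T(p^{i\to s}) = 0$, so $U_i(p^{i\to s}) = -\sum_{j\ne i} U_j(p^{i\to s})$, and one re-unfolds each resulting $U_j$-term in coordinate $j$, and so on. Every step strictly decreases a coordinate while staying weakly below $p$, so every $U_j$ that appears is evaluated at a point $\le p$; carried out consistently, this rewrites $T(p)$ as a linear combination of values $T(q)$ with $q<p$, all of which vanish, forcing $T(p)=0$ and contradicting $p\in\Gamma$.

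The hard part is exactly making that unfolding \emph{terminate and collapse to zero}: iterated naively it can loop, because the relation $T(q) = \sum_j U_j(q) = 0$ at a common point $q < p$ lets one re-express the $U_j(q)$ in terms of one another indefinitely. One needs a disciplined scheme — for instance an induction on $d$, or on $\sum_i |\pi_i(\Gamma)|$, peeling off one bottom slab $\pi_i^{-1}(v)$ (with $v$ the $\le_i$-least value occurring in coordinate $i$) at a time — that pins down which terms are rewritten and shows the residue vanishes. One should not expect to shortcut this via the linear-programming relaxation of the covering problem: already for $T$ supported on the antichain $\{(1,1,2),(1,2,1),(2,1,1)\} \subseteq [2]^3$ the fractional slab-cover number is $\tfrac{3}{2}$ while $\srank(T)$ and the integral cover number are $2$, so fractional duality is too weak and the antichain must be used directly. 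The alternative of inducting on $d$ — base case $d=2$, where an antichain support is, after reordering rows and columns, a ``staircase'' carrying at most one nonzero entry per row and per column, so $\mathrm{rank} = \srank = |\Gamma|$ — meets the same difficulty in another form, namely controlling the support of the $(d-1)$-tensor obtained by contracting one coordinate against a well-chosen functional. This is the ingredient one would import from \cite{st}.
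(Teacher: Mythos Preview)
The paper does not prove this lemma; it is quoted from \cite{st} without argument, so there is nothing in the paper to compare against. On its own merits: your upper bound is correct, and for the lower bound you set up exactly the right objects --- the greedy pivot sets $S_i$ with $|S_i|=r_i$ and, for an uncovered $p\in\Gamma$, the relations $a_{i,m}(p_i)=\sum_{s\in S_i,\,s<_i p_i} c^{i}_{s}\,a_{i,m}(s)$ --- but then try to iterate a one-coordinate-at-a-time substitution and, as you yourself say, cannot make it terminate. That is the gap, and it does not require importing anything further: you are one line from done.

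Perform all $d$ substitutions at once by pairing with a rank-one dual tensor. For each $i$ define $\mu_i\in\mathbb{C}^{[n]}$ by $\mu_i(p_i)=1$, $\mu_i(s)=-c^{i}_{s}$ for $s\in S_i$ with $s<_i p_i$, and $\mu_i=0$ elsewhere; then $\sum_{s}\mu_i(s)\,a(s)=0$ for every $a\in W_i$, and $\mu_i$ is supported in $\{s:s\le_i p_i\}$. Now evaluate
\[
\sum_{x\in[n]^d} T(x)\,\mu_1(x_1)\cdots\mu_d(x_d)
\]
two ways. Using $T=\sum_i U_i$, the $i$-th summand factors through $\sum_{x_i}\mu_i(x_i)\,a_{i,m}(x_i)=0$, so the whole sum vanishes. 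Using supports, $\prod_i\mu_i(x_i)\ne 0$ forces $x\le p$, and the antichain hypothesis leaves only $x=p$, giving the value $T(p)\cdot 1=T(p)$. Hence $T(p)=0$, the desired contradiction. This single contraction is precisely the closed form of your ``unfolding'' and is how the Sawin--Tao argument actually runs; the detours through fractional covers and induction on $d$ are unnecessary.
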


\begin{remark}
The slice rank was introduced by Tao in \cite{tao} and studied in \cite{st}. 
This notion found many applications especially in additive combinatorics, see \cite{gro} for a related survey. 
\end{remark}

\begin{remark}
For $d = 2$, the slice rank coincides with the usual matrix rank. For $d \ge 3$, it significantly differs from the more common {\it tensor rank} (e.g. \cite{lands}) which can be way larger. 
\end{remark}

\section{The Levi--Civita tensor}
For $i_1,\ldots, i_n \in [n]$ 
the {\it Levi--Civita symbol} is defined as follows
$$
\varepsilon(i_1,\ldots, i_n) := 
\begin{cases}
	\sgn(i_1,\ldots, i_n), & \text{ if $(i_1,\ldots, i_n) \in S_n$ is a permutation},\\
	0, & \text{ otherwise}.
\end{cases}
$$
The {\it Levi--Civita tensor} $E_n \in \mathsf{T}^n(n)$ is the $n$-tensor given by 
$
E_n(i_1,\ldots, i_n) = \varepsilon(i_1,\ldots, i_n)
$.

\begin{lemma}\label{five}
We have: $\srank(E_n^{\otimes k}) = n^k$ is full for all $k$. 
\end{lemma}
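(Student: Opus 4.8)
The plan is to combine the combinatorial formula of Lemma~\ref{saw} with an elementary double‑counting argument. First I would pin down the support of $E_n^{\otimes k}$. Identifying the index set $[n^k]$ of each of the $n$ tensor slots with $[n]^k$ via the lexicographic bijection, a basis element $(c_1,\ldots,c_n)$ with $c_\ell \leftrightarrow (i_\ell^{(1)},\ldots,i_\ell^{(k)})$ lies in the support precisely when $E_n^{\otimes k}(c_1,\ldots,c_n)=\prod_{m=1}^k \varepsilon(i_1^{(m)},\ldots,i_n^{(m)})\neq 0$, i.e.\ when for every $m\in[k]$ the tuple $\sigma^{(m)}:=(i_1^{(m)},\ldots,i_n^{(m)})$ is a permutation. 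Hence the support $\Gamma$ is naturally indexed by $k$‑tuples $(\sigma^{(1)},\ldots,\sigma^{(k)})\in (S_n)^k$, the associated element having $\ell$‑th coordinate $(\sigma^{(1)}(\ell),\ldots,\sigma^{(k)}(\ell))\in[n]^k$.

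Next I would verify that $\Gamma$ is an antichain once each of the $n$ coordinates is equipped with the \emph{same} lexicographic order on $[n]^k\cong[n^k]$. Suppose the elements attached to $(\sigma^{(m)})_m$ and $(\tau^{(m)})_m$ are comparable, say coordinatewise $\le$. I would argue by induction on $m=1,\ldots,k$ that $\sigma^{(m)}=\tau^{(m)}$: assuming $\sigma^{(j)}=\tau^{(j)}$ for $j<m$, the first $m-1$ entries agree at every coordinate $\ell$, so the lexicographic comparison at coordinate $\ell$ forces $\sigma^{(m)}(\ell)\le \tau^{(m)}(\ell)$ for all $\ell$; since $\sum_\ell \sigma^{(m)}(\ell)=\sum_\ell \tau^{(m)}(\ell)=\binom{n+1}{2}$, all these inequalities are equalities and $\sigma^{(m)}=\tau^{(m)}$. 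Thus the two elements coincide, so no two distinct elements of $\Gamma$ are comparable. Lemma~\ref{saw} now applies and gives
$$
\srank(E_n^{\otimes k})=\min_{\Gamma=\Gamma_1\cup\cdots\cup\Gamma_n}\ \sum_{\ell=1}^n |\pi_\ell(\Gamma_\ell)|,
$$
where $\pi_\ell:[n^k]^n\to[n^k]$ is the projection onto the $\ell$‑th coordinate.

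For the lower bound, fix a partition $\Gamma=\Gamma_1\cup\cdots\cup\Gamma_n$ and set $S_\ell:=\pi_\ell(\Gamma_\ell)\subseteq[n]^k$. Draw $(\sigma^{(1)},\ldots,\sigma^{(k)})$ uniformly from $(S_n)^k$. For each fixed $\ell$ the vector $(\sigma^{(1)}(\ell),\ldots,\sigma^{(k)}(\ell))$ is uniform on $[n]^k$, because each $\sigma^{(m)}(\ell)$ is uniform on $[n]$ and the $\sigma^{(m)}$ are independent; therefore the probability that the corresponding support element lies in $\Gamma_\ell$ is at most $|S_\ell|/n^k$. Summing over $\ell$ and using that the $\Gamma_\ell$ partition $\Gamma$ yields $1\le \sum_\ell |S_\ell|/n^k$, i.e.\ $\sum_\ell |\pi_\ell(\Gamma_\ell)|\ge n^k$. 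Combined with the trivial bound $\srank(E_n^{\otimes k})\le n^k$, valid since $E_n^{\otimes k}\in\mathsf{T}^n(n^k)$, this proves $\srank(E_n^{\otimes k})=n^k$.

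I expect the only mildly delicate step to be the antichain verification — that is, choosing a family of orderings that makes Lemma~\ref{saw} applicable; the lexicographic order together with the observation that a permutation has constant coordinate sum handles it cleanly. The remaining ingredients (the support description, the uniformity of coordinate projections of independent permutations, and the union/partition bound) are routine.
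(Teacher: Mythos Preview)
Your proof is correct and follows the same overall skeleton as the paper's: identify the support $\Gamma$ with $(S_n)^k$, check it is an antichain for the lexicographic order so that Lemma~\ref{saw} applies, and then bound $\sum_\ell|\pi_\ell(\Gamma_\ell)|$ from below by $n^k$. Your antichain verification is the same as the paper's, just phrased via the constant coordinate sum $\binom{n+1}{2}$ rather than the bare observation that two permutations related coordinatewise by $\le$ must coincide.

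The one genuine difference is in the lower bound step. The paper exhibits an explicit subset $S\subset\Gamma$ of size $n^k$ (built from the cyclic shift $\rho$ on $[n]^k$) on which every projection $\pi_j$ is injective, and then uses $|\pi_j(\Gamma_j)|\ge|\Gamma_j\cap S|$ to sum up to $|S|=n^k$. You instead exploit the symmetry of $\Gamma$ directly: each projection $\pi_\ell$ pushes the uniform measure on $\Gamma$ forward to the uniform measure on $[n]^k$, so $\Pr[x\in\Gamma_\ell]\le|\pi_\ell(\Gamma_\ell)|/n^k$ and the partition condition gives the bound. Your averaging argument is slicker and needs no auxiliary construction; the paper's argument, on the other hand, isolates a concrete ``diagonal'' set witnessing the full slice rank, which is a bit more informative structurally. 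Both are short and elementary.
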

\begin{proof}
The support of $E_n^{\otimes k} \in \mathsf{T}^{n}(n^k)$ can be identified with the following set
$$
\Gamma = \left\{(\mathbf{i}_1,\ldots, \mathbf{i}_n)\, :\,  \mathbf{i}_{\ell} = (i_{\ell, 1}, \ldots, i_{\ell, k}) \in [n]^k \text{ for } \ell \in [n], \text{ and } (i_{1,j}, \ldots, i_{n,j}) \in S_n \text{ for } j \in [n]\right\}.
$$
Take the lexicographic ordering $\le_{\ell}$ on $\mathbf{i}_{\ell} \in [n]^k$ for each $\ell \in [n]$, which define the partial order $\le$ on $\Gamma$. Let us show that $\Gamma$ is an antichain w.r.t. this partial order. Assume we have $(\mathbf{i}_1,\ldots, \mathbf{i}_n) \le (\mathbf{i}'_1,\ldots, \mathbf{i}'_n)$ for elements of $\Gamma$, which means $\mathbf{i}_{\ell} = (i_{\ell, 1}, \ldots, i_{\ell, k})\, \le_{\ell}\, \mathbf{i}'_{\ell} = (i'_{\ell, 1}, \ldots, i'_{\ell, k})$ for all $\ell \in [n]$. In particular, $i_{\ell, 1} \le i'_{\ell, 1}$ for all $\ell \in [n]$ but both $(i_{1,1}, \ldots, i_{n,1}), (i'_{1,1}, \ldots, i'_{n,1}) \in S_n$ are permutations which is only possible when $(i_{1,1}, \ldots, i_{n,1}) = (i'_{1,1}, \ldots, i'_{n,1})$. Since $\le_{\ell}$ are lexicographic, we then have $i_{\ell, 2} \le i'_{\ell, 2}$ for all $\ell \in [n]$ and by the same argument we get $(i_{1,2}, \ldots, i_{n,2}) = (i'_{1,2}, \ldots, i'_{n,2})$. Proceeding the same way we obtain that $(i_{1,j}, \ldots, i_{n,j}) = (i'_{1,j}, \ldots, i'_{n,j})$ for all $j \in [n]$ and hence $(\mathbf{i}_1,\ldots, \mathbf{i}_n) = (\mathbf{i}'_1,\ldots, \mathbf{i}'_n)$ which shows that $\Gamma$ is indeed an antichain. 

Let $\rho : [n]^k \to [n]^k$ be the (bijective) cyclic shift map given by 
$$
\rho : (i_1,\ldots, i_k) \longmapsto (i'_1,\ldots, i'_k) = (i_1 + 1,\ldots, i_k + 1) \text{ mod $n$}. 
$$
Consider the following subset of $\Gamma$
$$
S = \left\{(\mathbf{i}, \rho\, \mathbf{i}, \ldots, \rho^{n-1} \mathbf{i}) : \mathbf{i} \in [n]^k \right\} \subset \Gamma.
$$
Take any partition $\Gamma = \Gamma_1 \cup \cdots \cup \Gamma_n$. 
Note that for each $j \in [n]$ we have $|\pi_j(\Gamma_j)| \ge |\Gamma_j \cap S|$ since the elements of $S$ differ in the $j$-th coordinate. 
Hence we have 
$$
|\pi_1(\Gamma_1)| + \ldots + |\pi_n(\Gamma_n)| \ge |\Gamma_1 \cap S| + \ldots + |\Gamma_n \cap S| = |S| = n^k,
$$
which by Lemma~\ref{saw} implies that $\srank(E_n^{\otimes k}) \ge n^k$. 
On the other hand, we know that $\srank(E_n^{\otimes k}) \le n^k$ and hence the equality follows.
\end{proof}

\begin{remark}
It was noticed in \cite{gow} that $\srank(E_3) = 3$.
\end{remark}

\section{Unstable tensors}
The notion of unstable tensors comes from geometric invariant theory \cite{mumf}. A tensor $X \in \mathsf{T}^d(n)$ is called {\it unstable} if $P(X) = 0$ for every  $\mathrm{SL}(n)^d$-invariant homogeneous polynomial~$P$.
A tensor which is {\it not} unstable is called {\it semistable}.
The following characterization of unstable tensors shows their connection with the slice rank.

\begin{theorem}[{\cite[Cor.~6.5]{widg}}]
A tensor $X \in \mathsf{T}^d(n)$ is unstable iff $\srank(X^{\otimes k}) < n^k$ is not full for some $k$. 
\end{theorem}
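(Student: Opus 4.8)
The plan is to route both implications through the Hilbert--Mumford criterion. Recall that $X\in\mathsf{T}^d(n)$ is unstable for $\mathrm{SL}(n)^d$ precisely when $0$ lies in the (Euclidean) closure of the $\mathrm{SL}(n)^d$-orbit of $X$; moreover, replacing $X$ by $(g_1,\dots,g_d)\cdot X$ with $g_c\in\mathrm{GL}(n)$ changes neither the (semi)stability of $X$ (invariants transform by a character) nor the slice rank of any tensor power, so we may conjugate freely and assume a destabilizing one-parameter subgroup is diagonal. Concretely: $X$ is unstable iff, after such a change of basis, there are weight vectors $a^{(1)},\dots,a^{(d)}\in\mathbb{Z}^n$ with $\sum_i a^{(c)}_i=0$ for each $c$ such that
$$
a^{(1)}_{i_1}+\dots+a^{(d)}_{i_d}\ \ge\ 1\qquad\text{for every }(i_1,\dots,i_d)\in\operatorname{supp}(X).
$$
Here the $a^{(c)}$ encode the one-parameter subgroup $\lambda:t\mapsto\big(\operatorname{diag}(t^{a^{(c)}_i})_{i}\big)_{c}$, and the inequality is exactly the condition $\lambda(t)\cdot X\to 0$ as $t\to 0$.

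For the ``only if'' direction, I would start from such weights for $X$ and pass to $X^{\otimes k}$, identifying the $c$-th coordinate with a $k$-tuple $\mathbf i_c=(i_{c,1},\dots,i_{c,k})\in[n]^k$; then $(\mathbf i_1,\dots,\mathbf i_d)$ is in the support of $X^{\otimes k}$ iff every ``column'' $(i_{1,j},\dots,i_{d,j})$, $j\in[k]$, lies in $\operatorname{supp}(X)$. Writing $w_c(\mathbf i_c):=\sum_{j=1}^k a^{(c)}_{i_{c,j}}$ and summing the displayed inequality over the $k$ columns gives $\sum_c w_c(\mathbf i_c)\ge k$ on $\operatorname{supp}(X^{\otimes k})$, so by pigeonhole every support element has some coordinate $c$ with $w_c(\mathbf i_c)>k/(2d)$. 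Partition $\operatorname{supp}(X^{\otimes k})=\Gamma_1\sqcup\cdots\sqcup\Gamma_d$ by sending each element to such a $c$; then $X^{\otimes k}=\sum_c X^{\otimes k}\big|_{\Gamma_c}$, and $X^{\otimes k}\big|_{\Gamma_c}$ is supported, along its $c$-th coordinate, inside $\{\mathbf i\in[n]^k:w_c(\mathbf i)>k/(2d)\}$, hence has slice rank at most the cardinality of that set. Since $a^{(c)}$ is traceless, $w_c$ is a sum of $k$ i.i.d.\ mean-zero variables, so a Chernoff bound makes that cardinality at most $n^k e^{-\Omega(k)}$; thus for $k$ large enough $\srank(X^{\otimes k})\le\sum_c\big|\{\mathbf i:w_c(\mathbf i)>k/(2d)\}\big|<n^k$, as required.

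For the ``if'' direction I would first isolate the building block: any $T\in\mathsf{T}^d(m)$ with $\srank(T)<m$ is unstable. Indeed, take a slice-rank decomposition $T=\sum_{i=1}^r T_i$ with $r=\srank(T)<m$, let $W_c\subseteq\mathbb{C}^m$ be the span of the slice vectors used along coordinate $c$, put $r_c=\dim W_c$ (so $\sum_c r_c\le r<m$), and change basis in each coordinate so that $W_c=\langle\mathbf e_1,\dots,\mathbf e_{r_c}\rangle$. Then every support element $(i_1,\dots,i_d)$ of the transformed tensor satisfies $i_c\le r_c$ for at least one $c$, and the traceless integer weights $a^{(c)}_i:=m-r_c$ for $i\le r_c$, $a^{(c)}_i:=-r_c$ for $i>r_c$, satisfy $\sum_c a^{(c)}_{i_c}=|L|\,m-\sum_c r_c\ge m-\sum_c r_c\ge 1$ on the support, where $L=\{c:i_c\le r_c\}\ne\emptyset$; so by Hilbert--Mumford $T$ is unstable. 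Applying this with $T=X^{\otimes k}$, I get that $\srank(X^{\otimes k})<n^k$ forces $X^{\otimes k}$ to be unstable, and it remains to descend: unstability of the tensor power $X^{\otimes k}$ must imply unstability of $X$ itself.

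I expect this descent to be the one genuinely non-elementary point. It amounts to the multiplicativity of semistability under tensor products (if $X$ is semistable for $\mathrm{SL}(n)^d$, then $X^{\otimes k}$ is semistable for the full $\mathrm{SL}(n^k)^d$), which is where the GIT input of \cite{widg,blasiak} enters; it can be read off from the behaviour of the moment polytope under tensor powers (the moment polytope of $X^{\otimes k}$ contains the $k$-fold dilation of that of $X$, so the distinguished balanced point certifying semistability survives). A more self-contained route I would attempt uses the first fundamental theorem of invariant theory to write an $\mathrm{SL}(n)^d$-invariant not vanishing on $X$ as a contraction of a tensor power of $X$ against copies of the Levi--Civita tensor, and then combines monotonicity of slice rank under restriction with the fullness $\srank(E_n^{\otimes s})=n^s$ from Lemma~\ref{five}; working that last reduction out cleanly is the part I expect to cost the most. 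Everything else --- the Hilbert--Mumford bookkeeping, the pigeonhole, the Chernoff estimate, and the explicit one-parameter subgroups --- is routine.
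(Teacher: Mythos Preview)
The paper does not prove this theorem: it is quoted from \cite[Cor.~6.5]{widg}, and the only information the paper itself supplies is the subsequent remark that the arguments in \cite{blasiak,widg} go through the Hilbert--Mumford criterion. So there is no in-paper proof to compare against; your plan is in fact the standard route and is consistent with that remark.

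On the substance: your ``only if'' direction is complete and correct --- diagonalising a destabilising one-parameter subgroup, summing the resulting weight inequality over the $k$ tensor factors, pigeonholing the support of $X^{\otimes k}$ by which coordinate carries large weight, and bounding each piece by a Chernoff/Hoeffding tail for a sum of $k$ i.i.d.\ mean-zero bounded variables is exactly the argument in \cite{tao,st,blasiak}. For the ``if'' direction, your explicit traceless weights $m-r_c$ and $-r_c$ built from a non-full slice-rank decomposition correctly establish $\srank(T)<m\Rightarrow T$ unstable, and you rightly isolate the descent ``$X^{\otimes k}$ unstable for $\mathrm{SL}(n^k)^d$ $\Rightarrow$ $X$ unstable for $\mathrm{SL}(n)^d$'' as the one genuinely non-elementary step. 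In \cite{widg} this is handled through the moment-polytope/Kempf--Ness picture (your first sketch), and that is the route to commit to. One caution on your second sketch: writing a nonvanishing degree-$M_0$ invariant as a contraction of $X^{\otimes M_0}$ against permuted copies of $E_n$, even combined with Lemma~\ref{five} and monotonicity under restriction, most naturally certifies full slice rank only for powers divisible by $M_0$; since slice rank is not submultiplicative, extending this to all $k$ still requires an additional argument, so the FFT route does not obviously shortcut the descent.
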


Lemma~\ref{five} with this Theorem now give the following result. 
\begin{corollary}
The Levi--Civita tensor $E_n$ is semistable. 
\end{corollary}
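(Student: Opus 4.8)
The plan is to invoke the characterization of unstable tensors quoted above directly, with nothing left to do but bookkeeping. By definition a tensor is \emph{semistable} precisely when it is not unstable, so it suffices to show that $E_n$ is not unstable. The Theorem from \cite{widg} states that $X \in \mathsf{T}^d(n)$ is unstable if and only if $\srank(X^{\otimes k}) < n^k$ for some $k \ge 1$; taking the contrapositive, $X$ is semistable if and only if $\srank(X^{\otimes k}) = n^k$ for every $k \ge 1$.

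I would then simply apply Lemma~\ref{five}, which asserts exactly that $\srank(E_n^{\otimes k}) = n^k$ for all $k \ge 1$. Thus $E_n$ fails the slice-rank criterion for instability at every tensor power, and hence it is semistable. Note that the relevant dimension parameter is $d = n$ here, since $E_n \in \mathsf{T}^n(n)$, but this plays no role beyond fixing notation.

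The entire content of the corollary is pushed into Lemma~\ref{five}, whose proof establishes the full slice rank of all tensor powers via the antichain criterion of Lemma~\ref{saw} applied to the support $\Gamma$ of $E_n^{\otimes k}$, together with the cyclic-shift subset $S \subset \Gamma$ of size $n^k$ that forces every partition to have large projections. Once Lemma~\ref{five} is granted, there is no obstacle: the corollary is a one-line deduction from the cited Theorem. So I expect no hard step here at all — the difficulty, if any, lived entirely in verifying the antichain property and the $|S| = n^k$ lower bound in Lemma~\ref{five}.
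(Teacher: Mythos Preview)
Your proposal is correct and matches the paper's own argument exactly: the corollary is stated immediately after the Theorem from \cite{widg} and is deduced in one line by combining that characterization of unstable tensors with Lemma~\ref{five}. There is nothing more to add.
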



We use the following concrete description of SL-invariant generating polynomials.

\begin{lemma}[{\cite[Prop.~3.10]{widg}, cf. \cite[Ex.~7.18]{widg2}}]
The space of $\mathrm{SL}(n)^d$-invariant homogeneous polynomials of degree $M$ is nonzero only if $M$ is divisible by $n$, in which case it is spanned by the polynomials $\{ P_{M, \vec{\pi}} \} $ indexed by $d$-tuples of permutations $\vec \pi = (\pi_1,\ldots, \pi_d) \in (S_M)^d$
and given by 
\begin{align}\label{pmx}
P_{M, \vec{\pi}}(X) = \sum_{J_1,\ldots, J_d\, :\, [M] \to [n]}\, \prod_{k = 1}^d \varepsilon(J_k \circ \pi_k) \prod_{i = 1}^M X(J_1(i), \ldots, J_d(i)), \quad X \in \mathsf{T}^d(n), 
\end{align}
where for a map $J : [M] \to [n]$ we define the sign
$$
\varepsilon(J) := 
\varepsilon(J(1), \ldots, J(n)) \cdot \varepsilon(J(n+1), \ldots, J(2n)) \cdot \ldots \cdot \varepsilon(J(M - n + 1), \ldots, J(M)) 
\in \{0, \pm 1 \}.
$$
\end{lemma}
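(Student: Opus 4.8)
This lemma is the first fundamental theorem of classical invariant theory for $\mathrm{SL}(n)^d$ acting on tensor space, and the plan is to reduce it to the single-factor statement for $\mathrm{SL}(n)$ and then invoke Schur--Weyl duality. Write $V = \mathbb{C}^n$ and $W = V^{\otimes d}$, so that a homogeneous degree-$M$ polynomial on $\mathsf{T}^d(n)$ is an element of $\mathrm{Sym}^M(W^*)$, which I view inside $(W^*)^{\otimes M} = \bigotimes_{k=1}^d (V^*)^{\otimes M}$ via the $S_M$-symmetrizer. Since the $S_M$-action and the $\mathrm{SL}(n)^d$-action commute, and since the powers $\{X^{\otimes M} : X \in W\}$ linearly span $\mathrm{Sym}^M W$, the pairing $\Phi \mapsto \big(X \mapsto \langle \Phi, X^{\otimes M}\rangle\big)$ carries $\big((W^*)^{\otimes M}\big)^{\mathrm{SL}(n)^d}$ onto the space of $\mathrm{SL}(n)^d$-invariant degree-$M$ polynomials --- symmetrization is automatic here because $X^{\otimes M}$ is already symmetric. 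So the first task is to describe $\big((W^*)^{\otimes M}\big)^{\mathrm{SL}(n)^d}$.

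Because $\mathrm{SL}(n)^d$ is a direct product and $(W^*)^{\otimes M}$ is the outer tensor product of the $d$ factors $(V^*)^{\otimes M}$, one has $\big((W^*)^{\otimes M}\big)^{\mathrm{SL}(n)^d} = \bigotimes_{k=1}^d \big((V^*)^{\otimes M}\big)^{\mathrm{SL}(n)}$, and it remains to treat a single factor. Here I would use the classical fact that $\big((V^*)^{\otimes M}\big)^{\mathrm{SL}(n)}$ vanishes unless $n \mid M$, and, writing $M = nL$, is spanned by the ``products of $L$ determinant forms'' $(\mathbf{v}_1,\ldots,\mathbf{v}_M) \mapsto \prod_{t=1}^{L} \det\!\big(\mathbf{v}_{\pi(n(t-1)+1)},\ldots,\mathbf{v}_{\pi(nt)}\big)$ indexed by $\pi \in S_M$, i.e.\ by the tensors $J \mapsto \varepsilon(J \circ \pi)$ in the notation of the lemma. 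I would establish this via Schur--Weyl duality: decompose $V^{\otimes M}$ into $\mathrm{GL}(n)$-isotypic components indexed by partitions $\lambda \vdash M$ with at most $n$ parts; such a component carries a nonzero $\mathrm{SL}(n)$-fixed vector precisely when $\lambda$ is the rectangle with $n$ rows and $L$ columns (each column being a copy of $\wedge^n V$, the trivial $\mathrm{SL}(n)$-module), and the space of fixed vectors is then the associated $S_M$-Specht multiplicity space, which is spanned by the determinant products as $\pi$ ranges over $S_M$. (Alternatively one can argue through Weyl's $\Omega$-process and the Capelli identities.)

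Tensoring these spanning sets over $k = 1,\ldots,d$ shows that $\big((W^*)^{\otimes M}\big)^{\mathrm{SL}(n)^d}$ is zero unless $n \mid M$ and, in that case, is spanned by the tensors $\bigotimes_{k=1}^d \big(J_k \mapsto \varepsilon(J_k \circ \pi_k)\big)$ indexed by $\vec\pi = (\pi_1,\ldots,\pi_d) \in (S_M)^d$; pairing such a tensor with $X^{\otimes M}$ yields exactly $P_{M,\vec\pi}(X)$ of \eqref{pmx}. Combined with the reduction in the first paragraph, this shows that the $P_{M,\vec\pi}$ span the $\mathrm{SL}(n)^d$-invariant homogeneous polynomials of degree $M$ and that this space is nonzero only when $n \mid M$. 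The one genuinely nontrivial ingredient is the single-factor first fundamental theorem in the second paragraph; the passage from polynomials to tensors, the splitting over the $d$ factors, and the freeness of symmetrization against $X^{\otimes M}$ are all formal.
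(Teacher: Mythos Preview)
Your proof sketch is correct and is essentially the standard argument via the first fundamental theorem of invariant theory for $\mathrm{SL}(n)$ together with Schur--Weyl duality. Note, however, that the paper does \emph{not} supply its own proof of this lemma: it is stated with a citation to \cite[Prop.~3.10]{widg} and \cite[Ex.~7.18]{widg2} and used as a black box. So there is no in-paper argument to compare against; what you have written is a valid justification of the cited result, and the approach you outline (reduce polynomials to multilinear invariants in $(W^*)^{\otimes M}$, split over the $d$ factors, then invoke the single-factor FFT) is indeed how the references establish it.
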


\begin{corollary}\label{cor1}
Let $X \in \mathsf{T}^d(n)$ be semistable. Then $P_{M, \vec{\pi}}(X) \ne 0$ for some $M$ divisible by $n$ and permutations $\vec \pi \in (S_M)^d$.
\end{corollary}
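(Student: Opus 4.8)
The plan is to obtain this immediately by unwinding the definition of semistability and invoking the preceding structural Lemma. Since $X$ is semistable, it is by definition not unstable, so there is at least one $\mathrm{SL}(n)^d$-invariant homogeneous polynomial $P$, which we may take of positive degree, with $P(X) \ne 0$; set $M := \deg P$. Then the space of $\mathrm{SL}(n)^d$-invariant homogeneous polynomials of degree $M$ contains $P$ and is therefore nonzero, so the Lemma applies: $n$ divides $M$, and $P$ lies in the span of the $P_{M, \vec{\pi}}$, i.e.
$$
P = \sum_{\vec{\pi} \in (S_M)^d} c_{\vec{\pi}}\, P_{M, \vec{\pi}}
$$
for suitable scalars $c_{\vec{\pi}} \in \mathbb{C}$.

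It then remains only to evaluate at $X$. From $P(X) \ne 0$ together with $P(X) = \sum_{\vec{\pi}} c_{\vec{\pi}}\, P_{M, \vec{\pi}}(X)$ we conclude that at least one summand is nonzero, so $P_{M, \vec{\pi}}(X) \ne 0$ for some $\vec{\pi} \in (S_M)^d$ with $M$ divisible by $n$, which is the claim.

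There is no real obstacle here; the statement is a bookkeeping consequence of the Lemma, and its purpose is to replace the abstract null-cone condition by the concrete polynomials $P_{M,\vec{\pi}}$ that will be analyzed later (applied, in particular, to $X = E_n$, which is semistable by the preceding Corollary). The only point worth stating carefully is the convention that the homogeneous polynomial detecting semistability is taken of positive degree, equivalently vanishing at the zero tensor, so that $M \ge n$; with the literal reading that also allows degree $0$ the conclusion holds trivially with $M = 0$, but that case carries no information.
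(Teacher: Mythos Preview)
Your argument is correct and is exactly the intended one: the paper states this as an immediate corollary (with no separate proof) of the definition of semistability together with the preceding Lemma describing the $\mathrm{SL}(n)^d$-invariants, and your write-up simply makes that deduction explicit. The side remark about excluding degree $0$ is appropriate and matches the implicit convention in the paper's definition of unstable.
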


It is also helpful to use the polynomials $P$ as relative $\mathrm{GL}$-invariants. 
\begin{lemma}\label{pgl}
Let $X \in \mathsf{T}^d(n)$ and $A_1,\ldots, A_d \in \mathrm{GL}(n)$. We have 
\begin{align*}
P_{M, \vec{\pi}}((A_1,\ldots, A_d) \cdot X) = P_{M, \vec{\pi}}(X) \cdot \det(A_1)^{M/n} \cdots \det(A_d)^{M/n}.
\end{align*}
\end{lemma}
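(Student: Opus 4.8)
The plan is to prove Lemma~\ref{pgl} by expanding both sides directly; the only genuine input is the following single--block identity: for every $A \in \mathrm{GL}(n)$ and every map $f : [n] \to [n]$,
$$\sum_{g : [n] \to [n]} \varepsilon(g(1),\ldots,g(n))\,\prod_{i=1}^n A(g(i),f(i)) \;=\; \varepsilon(f(1),\ldots,f(n))\cdot\det(A).$$
This is immediate: only permutations $g$ contribute; if $f$ is a bijection, the reindexing $j = f(i)$ together with $h = g\circ f^{-1}$ (so $\sgn(g)=\sgn(h)\sgn(f)$) turns the left side into $\sgn(f)\sum_{h\in S_n}\sgn(h)\prod_j A(h(j),j) = \sgn(f)\det(A)$ by the standard expansion of the determinant; if $f$ is not injective, say $f(i_1)=f(i_2)$, then the involution $g\mapsto g\circ(i_1\,i_2)$ negates $\sgn(g)$ while leaving $\prod_i A(g(i),f(i))$ unchanged, so terms cancel in pairs and the sum is $0 = \varepsilon(f(1),\ldots,f(n))\det(A)$.

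Set $Y = (A_1,\ldots,A_d)\cdot X$. I would substitute the definition of the multilinear product into the formula~\eqref{pmx} for $P_{M,\vec{\pi}}$: expanding each factor $Y(J_1(i),\ldots,J_d(i))$ over $i\in[M]$ introduces auxiliary maps $L_1,\ldots,L_d:[M]\to[n]$, and after moving the sums over the $L_k$ to the outside one gets
$$P_{M,\vec{\pi}}(Y) = \sum_{L_1,\ldots,L_d}\left(\prod_{k=1}^d\ \sum_{J_k:[M]\to[n]}\varepsilon(J_k\circ\pi_k)\prod_{i=1}^M A_k(J_k(i),L_k(i))\right)\prod_{i=1}^M X(L_1(i),\ldots,L_d(i)),$$
using that for fixed $L_1,\ldots,L_d$ the inner sum over the $J$'s splits as a product over $k$. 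Thus everything reduces to evaluating, for one matrix $A$, one permutation $\pi\in S_M$, and one map $L:[M]\to[n]$, the quantity $S(A,\pi,L):=\sum_{J:[M]\to[n]}\varepsilon(J\circ\pi)\prod_{i=1}^M A(J(i),L(i))$.

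To evaluate $S(A,\pi,L)$, I would first absorb $\pi$: the bijection $J\mapsto J\circ\pi^{-1}$ on maps $[M]\to[n]$, together with the matching reindexing of the product, turns $\varepsilon(J\circ\pi)$ into $\varepsilon(J)$ and replaces $L$ by $L\circ\pi$, so $S(A,\pi,L)=S(A,\mathrm{id},L\circ\pi)$. Now $\varepsilon(J)$ factors as the product of the $M/n$ Levi--Civita symbols over the consecutive blocks $B_b=\{(b-1)n+1,\ldots,bn\}$, and a map $J:[M]\to[n]$ is the same as an independent choice of restrictions $J|_{B_b}$; hence $S(A,\mathrm{id},L\circ\pi)$ factors as a product over $b$ of the single--block identity above applied to $f = (L\circ\pi)|_{B_b}$, giving $S(A,\pi,L)=\varepsilon(L\circ\pi)\,\det(A)^{M/n}$. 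Plugging this back in for each $k$, pulling out $\prod_k\det(A_k)^{M/n}$, and recognizing the remaining sum over $L_1,\ldots,L_d$ as $P_{M,\vec{\pi}}(X)$ via~\eqref{pmx} finishes the proof.

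The argument is routine; the one point that requires care is the order of operations in evaluating $S(A,\pi,L)$, since $\varepsilon(J\circ\pi)$ does not factor over blocks of the domain of $J$ until $\pi$ has been removed by the substitution $J\mapsto J\circ\pi^{-1}$, so the $\pi$-twist must precede the block decomposition. (Conceptually, one could instead observe that for fixed $X$ the map $(A_1,\ldots,A_d)\mapsto P_{M,\vec{\pi}}((A_1,\ldots,A_d)\cdot X)$ is a polynomial on $\mathrm{GL}(n)^d$ invariant under left multiplication of each $A_k$ by $\mathrm{SL}(n)$, hence a scalar times $\prod_k\det(A_k)^{a_k}$; comparing the degree $M$ in the entries of each $A_k$ forces $a_k=M/n$, and setting all $A_k$ equal to the identity identifies the scalar as $P_{M,\vec{\pi}}(X)$. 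This uses the classical fact that an $\mathrm{SL}(n)$-left-invariant polynomial on $n\times n$ matrices is a polynomial in the determinant, which the direct computation above sidesteps.)
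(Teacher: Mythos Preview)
Your proof is correct, but it is organized quite differently from the paper's. The paper argues as follows: reduce to a single factor $A=A_1$; write $A=BD$ with $B\in\mathrm{SL}(n)$ and $D=\mathrm{diag}(\det A,1,\ldots,1)$; use the $\mathrm{SL}(n)^d$-invariance of $P_{M,\vec\pi}$ (already recorded in the preceding lemma) to replace $A$ by $D$; finally observe that acting by $D$ multiplies each entry $X(1,*,\ldots,*)$ by $\det A$, and every nonzero monomial in \eqref{pmx} contains exactly $M/n$ such entries (since $\varepsilon(J_1\circ\pi_1)\ne0$ forces each value in $[n]$ to be hit $M/n$ times by $J_1$), yielding the factor $\det(A)^{M/n}$. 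Your route instead expands everything directly and reduces to the block identity $\sum_g \varepsilon(g)\prod_i A(g(i),f(i))=\varepsilon(f)\det A$, which you prove from scratch. The trade-off: the paper's argument is shorter and more conceptual because it consumes the $\mathrm{SL}$-invariance as a black box, whereas your computation is self-contained and in fact re-derives that invariance (set $\det A_k=1$). Your closing parenthetical remark is essentially the paper's strategy, so you have in effect presented both proofs.
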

\begin{proof}
It is enough to check the identity for one matrix $A = A_1$. Write $A = BD$ for $B \in \mathrm{SL}(n)$ and $D = \mathrm{diag}(\det(A), 1, \ldots, 1)$. Then as $P_{M, \vec{\pi}}$ is $\mathrm{SL}(n)^d$-invariant, we get
$$
P_{M, \vec{\pi}}((BD,I, \ldots, I) \cdot X) = P_{M, \vec{\pi}}((B, I, \ldots, I) \cdot ((D,I, \ldots, I) \cdot X) )= P_{M, \vec{\pi}}((D,I, \ldots, I) \cdot X).
$$
Let 
$Y = (D,I, \ldots, I) \cdot X$. We have 
$$Y(i_1,\ldots, i_d) = \sum_{j} D(i_1, j) X(j, i_2,\ldots, i_d) = 
\begin{cases}
	\det(A) \cdot X(i_1, \ldots, i_d), & \text{ if $i_1 = 1$},\\
	X(i_1, \ldots, i_d), & \text{ otherwise}.
\end{cases}
$$
From the formula \eqref{pmx} we can see that each nonzero term 
$\prod_{k = 1}^d \varepsilon(J_k \circ \pi_k) \prod_{i = 1}^M X(J_1(i), \ldots, J_d(i))$
of $P_{M, \vec{\pi}}(X)$ has exactly $M/n$ variables $X(1, * \ldots, *)$. Hence,
$P_{M, \vec{\pi}}(Y) = P_{M, \vec{\pi}}(X) \cdot \det(A)^{M/n}$ as needed.
\end{proof}

\begin{remark}
Connection of slice rank with unstable tensors was first established in \cite{blasiak}, where it was shown that $\srank(X) < n$ implies $X$ is unstable, and if $X$ is unstable then $\srank(X^{\otimes k}) < n^k$ for some $k$. In \cite{blasiak} these results are given for $d = 3$ and for any $d$ the statements are in \cite{widg}; the proofs use the Hilbert--Mumford criterion.
\end{remark}

\begin{remark}
The formula \eqref{pmx} is given in exactly this form in \cite[Ex.~7.18]{widg2}, and in \cite[Prop.~3.10]{widg} it is stated in a slightly  different form. 
\end{remark}

\begin{remark}\label{lbd}
The degree $M$ can be bounded above using a result from \cite{derk}, see \cite[Lemma~7.11]{widg} for a precise statement, which gives $M \le d^{d n^2 - d} n^d$.
\end{remark}

\begin{remark}
For even $d$, the minimal degree $n$ SL-invariant polynomial $P_{n, \vec \pi}$
is in fact Cayley's first hyperdeterminant \cite{cay}. In \cite{ay} it is shown that hyperdeterminants also vanish on tensors whose certain refinements of the slice rank are non-full. 
\end{remark}

\section{Determinantal tensors} For a matrix $A$ denote by $A[i]$ the $i$-th column vector of $A$.
For matrices $A_1,\ldots, A_n \in \mathrm{GL}(n)$ define the {\it determinantal tensor} $D = {D}(A_1,\ldots, A_n) \in \mathsf{T}^n(n)$ given by 
$$
D(i_1,\ldots, i_n) := \det(A_1[i_1], \ldots, A_n[i_n]), \quad \forall i_1,\ldots, i_n \in [n].
$$

\begin{lemma}\label{16}
We have:

(i)
Let $A_1,\ldots, A_n, B_1,\ldots, B_n \in \mathrm{GL}(n)$.  Then
$$
D(A_1 B_1,\ldots, A_n B_n) = (B^T_1,\ldots, B^T_n) \cdot D(A_1,\ldots, A_n).
$$

(ii) $
D(I_n, \ldots, I_n) = E_n,
$
where $I_n$ is the identity $n \times n$ matrix.

\end{lemma}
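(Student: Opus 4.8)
The plan is to verify both identities directly from the definitions by a coordinate computation, using only multilinearity of the determinant.

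For part (i), I would start from the definition
$$
D(A_1B_1,\ldots,A_nB_n)(i_1,\ldots,i_n) = \det\bigl((A_1B_1)[i_1],\ldots,(A_nB_n)[i_n]\bigr)
$$
and use the elementary fact that the $i_k$-th column of a product is $(A_kB_k)[i_k] = A_k\bigl(B_k[i_k]\bigr) = \sum_{j_k} B_k(j_k,i_k)\,A_k[j_k]$. Substituting this expansion into each of the $n$ slots and expanding the determinant by multilinearity in its column arguments yields
$$
\det\bigl((A_1B_1)[i_1],\ldots,(A_nB_n)[i_n]\bigr) = \sum_{j_1,\ldots,j_n} B_1(j_1,i_1)\cdots B_n(j_n,i_n)\,\det\bigl(A_1[j_1],\ldots,A_n[j_n]\bigr).
$$
The determinant on the right is exactly $D(A_1,\ldots,A_n)(j_1,\ldots,j_n)$, and writing $B_k(j_k,i_k) = B_k^T(i_k,j_k)$ identifies the whole sum with the $(i_1,\ldots,i_n)$-coordinate of the multilinear product $(B_1^T,\ldots,B_n^T)\cdot D(A_1,\ldots,A_n)$ as defined in Section~\ref{prelim}. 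Since this holds for all indices, (i) follows.

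For part (ii), the $i$-th column of $I_n$ is the standard basis vector $\mathbf{e}_i$, so $D(I_n,\ldots,I_n)(i_1,\ldots,i_n) = \det(\mathbf{e}_{i_1},\ldots,\mathbf{e}_{i_n})$. This determinant vanishes unless $i_1,\ldots,i_n$ are pairwise distinct, i.e. $(i_1,\ldots,i_n)$ is a permutation, in which case it equals $\sgn(i_1,\ldots,i_n)$; this is precisely $\varepsilon(i_1,\ldots,i_n) = E_n(i_1,\ldots,i_n)$, proving (ii).

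There is no substantial obstacle: both statements are bookkeeping. The only point requiring care is the index convention in the multilinear product — which argument of $B_k$ is summed against the tensor — and it is exactly this convention that forces the transpose to appear in (i); matching $B_k(j_k,i_k)$ with $B_k^T(i_k,j_k)$ resolves it cleanly.
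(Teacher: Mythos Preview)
Your proof is correct and follows essentially the same approach as the paper: both arguments expand $(A_kB_k)[i_k]=\sum_{j_k}B_k(j_k,i_k)\,A_k[j_k]$ and invoke multilinearity of the determinant, then match the result with the definition of the multilinear product (forcing the transpose). The only cosmetic difference is that the paper first reduces to a single factor $B_1=B$ and checks that slot, whereas you expand all $n$ slots simultaneously; for part~(ii) the two proofs are identical.
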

\begin{proof}
(i) It is enough to check the identity for one matrix $B_1 = B$. 
By definition and multilinearity of determinants we have 
\begin{align*}
D(A_1 B, A_2,\ldots, A_n) (i_1,\ldots, i_n) &= \det(A_1 B [i_1], A_2[i_2], \ldots, A_n[i_n]) \\
&= \det\left( \sum_{j = 1}^n A_1^{}[j] \cdot  B(j, i_1), A_2[i_2], \ldots, A_n[i_n] \right)\\
&= \sum_{j = 1}^n B(j, i_1) \cdot \det(A_1[j], A_2[i_2], \ldots, A_n[i_n])\\
&= (B^T, I_n, \ldots, I_n) \cdot D(A_1,\ldots, A_n) (i_1,\ldots, i_n).
\end{align*}
(ii) We have
$$
D(I_n, \ldots, I_n) (i_1,\ldots, i_n) = \det(\mathbf{e}_{i_1}, \ldots, \mathbf{e}_{i_n}) = \varepsilon(i_1,\ldots, i_n)
$$
and the equality follows.
\end{proof}

\begin{corollary}
Let $B_1,\ldots, B_n \in \mathrm{GL}(n)$. We have 
$$
D(B_1,\ldots, B_n) = (B^T_1,\ldots, B^T_n) \cdot D(I_n,\ldots, I_n) = (B^T_1,\ldots, B^T_n) \cdot E_n.
$$
\end{corollary}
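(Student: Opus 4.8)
The plan is to obtain this as an immediate specialization of Lemma~\ref{16}. First I would apply part~(i) of that lemma with $A_1 = \cdots = A_n = I_n$. Since $I_n \in \mathrm{GL}(n)$ and $I_n B_i = B_i$ for each $i \in [n]$, the identity
$$
D(A_1 B_1,\ldots, A_n B_n) = (B^T_1,\ldots, B^T_n) \cdot D(A_1,\ldots, A_n)
$$
becomes
$$
D(B_1,\ldots, B_n) = (B^T_1,\ldots, B^T_n) \cdot D(I_n,\ldots, I_n),
$$
which is the first claimed equality. Then I would substitute the value $D(I_n,\ldots, I_n) = E_n$ from part~(ii) of the same lemma into the right-hand side, yielding $D(B_1,\ldots, B_n) = (B^T_1,\ldots, B^T_n) \cdot E_n$ and completing the chain.

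I do not expect any obstacle here: the corollary is a purely formal consequence of the lemma, with no degenerate cases, since the hypotheses of Lemma~\ref{16}(i) ask for both families of matrices to lie in $\mathrm{GL}(n)$, and we have $B_1,\ldots,B_n \in \mathrm{GL}(n)$ by assumption and $A_i = I_n \in \mathrm{GL}(n)$. The only point I would be careful about is the bookkeeping with transposes: part~(i) places $B^T_i$, not $B_i$, in the $i$-th multilinear slot, so the final formula must read $(B^T_1,\ldots,B^T_n)\cdot E_n$ exactly as stated, and not $(B_1,\ldots,B_n)\cdot E_n$.

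Conceptually, the point of recording this corollary is that it exhibits every determinantal tensor $D(B_1,\ldots,B_n)$ as lying in the $\mathrm{GL}(n)^n$-orbit of the Levi--Civita tensor $E_n$; combined with the fact that $E_n$ is semistable and with the relative $\mathrm{GL}$-invariance of the polynomials $P_{M,\vec\pi}$ from Lemma~\ref{pgl}, this is presumably what will let one detect non-vanishing of $P_{M,\vec\pi}$ on determinantal tensors in the proof of Theorem~\ref{one}.
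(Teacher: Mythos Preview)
Your argument is correct and is exactly the intended one: the paper states this corollary without proof precisely because it is the immediate specialization of Lemma~\ref{16}(i) at $A_1=\cdots=A_n=I_n$ followed by Lemma~\ref{16}(ii). Your remark about the role of the corollary in the proof of Theorem~\ref{one} is also on point.
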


\begin{remark}
Determinantal tensors are implicitly used in \cite{onn}; an explicit formulation appears in \cite{al}. 
\end{remark}

\section{Proof of Theorem~\ref{one}}
We are now ready to prove the result. We have $B_1, \ldots, B_n \in \mathrm{GL}(n)$ whose elements are given by column vectors. 
Consider the determinantal tensor 
$$
D = D(B_1,\ldots, B_n) = (B^T_1,\ldots, B^T_n) \cdot E_n.
$$
Since $E_n$ is semistable, there exist $M = \ell n$ and $\vec \pi \in (S_M)^n$ such that 
$P_{M, \vec \pi}(E_n) \ne 0$ (Cor.~\ref{cor1}). 
By Lemmas~\ref{pgl} and \ref{16} we have 
$$
P_{M, \vec \pi}(D) = P_{M, \vec \pi}(E_n) \cdot \det(B_1)^{\ell} \cdots \det(B_n)^{\ell} \ne 0.
$$
On the other hand, let us check the expansion of this polynomial, which is given by
\begin{align*}
P_{M, \vec{\pi}}(D) &= \sum_{J_1,\ldots, J_n\, :\, [M] \to [n]}\, \prod_{k = 1}^n \varepsilon(J_k \circ \pi_k) \prod_{i = 1}^M D(J_1(i), \ldots, J_n(i)) \\
&= \sum_{J_1,\ldots, J_n\, :\, [M] \to [n]}\, \prod_{k = 1}^n \varepsilon(J_k \circ \pi_k) \prod_{i = 1}^M \det(B_1[J_1(i)], \ldots, B_n[J_n(i)]).
\end{align*}
Since $P_{M, \vec{\pi}}(D) \ne 0$, at least one term in this expansion is also nonzero, which will give a desired arrangement. Indeed, if 
$$
\prod_{k = 1}^n \varepsilon(J_k \circ \pi_k) \prod_{i = 1}^M \det(B_1[J_1(i)], \ldots, B_n[J_n(i)]) \ne 0
$$
then we can arrange the columns of $B_1,\ldots, B_n$ into $n \times M$ matrix $A$ w.r.t. the maps $J_1,\ldots, J_n : [M] \to [n]$ such that the $i$-th column of $A$ has the entries $B_1[J_1(i)], \ldots, B_n[J_n(i)]$ of the corresponding columns of $B_1,\ldots, B_n$. Since $\det(B_1[J_1(i)], \ldots, B_n[J_n(i)]) \ne 0$ they are all bases as needed. The rows of $A$ also satisfy the needed property, i.e. each entry appears exactly $\ell$ times, since $\varepsilon(J_k \circ \pi_k) \ne 0$ for all $k = 1,\ldots, n$ which is clear from the definition of the sign $\varepsilon(J)$.  
\qed

\begin{remark}
From Remark~\ref{lbd}, we can see that an upper bound on the multiplicity $\ell = M/n$ is large, it gives $\ell \le n^{n^3}$. 
\end{remark}


\section{Concluding remarks}
\subsection{} 
As discussed in \cite{hr}, Rota's basis conjecture is related 
to certain conjectured polynomial identities originating from invariant theory. 
In \cite{rot10}, Rota leaves the following interesting remark 
on his conjecture: 
\begin{quote}
``\,
I would feel crushed if the basis conjecture were to be settled by methods other than some new insight in the algebra of invariant theory.\,"
\end{quote}

\subsection{} 
It is reasonable to ask if Conjecture~\ref{rota} can now be completed by a combinatorial argument.
Is it possible to transform a matrix $A$  satisfying the conditions of Theorem~\ref{one} (e.g. via some exchange operations), so that we can choose $n$ columns satisfying the conditions of Conjecture~\ref{rota}? For instance, one matrix (obtained after transformations) which resolves the problem, is a matrix whose every column is repeated $\ell$ times.  

\subsection{} 
The Alon--Tarsi conjecture on latin squares \cite{at} can be formulated that $P_{n, \vec \pi}(E_n) \ne 0$ for even $n$ (here the minimal invariant function $P_{n, \vec \pi}$ coincides (up to a sign) with Cayley's first hyperdeterminant). Hence our result that $E_n$ is semistable and $P_{M, \vec \pi}(E_n) \ne 0$ for some $M$ divisible by $n$, can be viewed as an analogue of this conjecture. This result can also be formulated in terms of certain Latin-type matrices. 

\subsection{}
Finally, it is tempting to draw a distant similarity in the approach to Klyachko's theorem on asymptotic saturation of Littlewood--Richardson coefficients \cite{kly} whose proof also relied on geometric invariant theory.

\section*{Acknowledgements}
I am grateful to Alimzhan Amanov for useful comments and many interesting conversations. 



\begin{thebibliography}{abcdefghi}
\bibitem[AB06]{ab}
R. Aharoni and E. Berger, The intersection of a matroid and a simplicial complex, Trans. Amer. Math. Soc. {\bf 358} (2006), 4895--4917.

\bibitem[AL15]{al}
R. Aharoni and M. Loebl, The odd case of Rota’s bases conjecture, Adv. Math. {\bf 282} (2015), 427--442.

\bibitem[AT92]{at}
N. Alon and M. Tarsi, Coloring and orientations of graphs, Combinatorica {\bf 12} (1992), 125--143.

\bibitem[AY21]{ay}
A. Amanov and D. Yeliussizov, Tensor slice rank and Cayley's first hyperdeterminant, arXiv:2107.08864 (2021).

\bibitem[BCC+17]{blasiak}
J. Blasiak, T. Church, H. Cohn, J. A. Grochow, E. Naslund, W. F. Sawin, and C. Umans, On cap sets and the group-theoretic approach to matrix multiplication, Discrete Anal. (2017), paper No. 3, 27 p.

\bibitem[BKPS20]{bkps}
M. Buci\'c, M. Kwan, A. Pokrovskiy, and B. Sudakov,  Halfway to Rota’s basis conjecture, Intern. Math. Res. Not., {\bf 21} 2020, 8007--8026.


\bibitem[BGO+17]{widg}
P. B\"urgisser, A. Garg, R. Oliveira, M. Walter, and A. Wigderson, Alternating minimization, scaling algorithms, and the null-cone problem from invariant theory, arXiv:1711.08039 (2017).

\bibitem[BFG+19]{widg2}
P. B\"urgisser, C. Franks, A. Garg, R. Oliveira, M. Walter, A. Wigderson, 
Towards a theory of non-commutative optimization: geodesic first and second order methods for moment maps and polytopes, 60th IEEE Symposium on Foundations of Computer Science (FOCS) (2019), 845--861, arXiv:1910.12375 (long version)

\bibitem[Cay43]{cay}
A. Cayley, On the theory of determinants, Trans. Cambridge Phil. Soc. VIII (1843), 1--16.

\bibitem[Che12]{che}
M. Cheung, Computational proof of Rota’s basis conjecture for matroids of rank 4, preprint available at \url{http://educ.jmu.edu/~duceyje/undergrad/2012/mike.pdf} (2012).

\bibitem[Cho95]{chow1}
T. Chow, On the Dinitz conjecture and related conjectures, Discrete math. {\bf 145} (1995), 73--82.

\bibitem[Cho17]{polymath}
T. Chow, Rota’s basis conjecture: Polymath 12 (2017) \\ 
\url{https://polymathprojects.org/2017/03/06/rotas-basis-conjecture-polymath-12-2/} 

\bibitem[Der01]{derk}
H. Derksen, Polynomial bounds for rings of invariants, Proc. Amer. Math. Soc. {\bf 129} (2001), 955--963.

\bibitem[DG19]{dg}
S. Dong, and J. Geelen. Improved bounds for Rota’s Basis Conjecture, Combinatorica {\bf 39} (2019), 265--272.

\bibitem[Dri97]{dri1}
A. Drisko, On the number of even and odd Latin squares of order $p + 1$, Adv. Math. {\bf 128} (1997), 20--35.

\bibitem[FM19]{fm}
B. Friedman, and S. McGuinness, Girth conditions and Rota's basis conjecture, arXiv:1908.01216 (2019).

\bibitem[GW07]{gw}
J. Geelen, and K. Webb, On Rota’s Basis Conjecture, SIAM J. Discrete Math. {\bf 21} (2007), 802--804.

\bibitem[Gly10]{glynn}
D. Glynn, The conjectures of Alon--Tarsi and Rota in dimension prime minus one, SIAM J. Discrete Math. {\bf 24} (2010), 394--399.

\bibitem[Gow21]{gow}
W. T. Gowers, The slice rank of a direct sum, arXiv:2105.08394 (2021).

\bibitem[Gro19]{gro}
J. Grochow, New applications of the polynomial method: The cap set conjecture and beyond, Bull. Amer. Math. Soc. {\bf 56} (2019), 29--64.

\bibitem[GH06]{gh}
J. Geelen, and P. J. Humphries, Rota’s basis conjecture for paving matroids, SIAM J. Discrete Math. {\bf 20} (2006), 1042--1045.

\bibitem[HR94]{hr}
R. Huang, G. C. Rota, On the relations of various conjectures on Latin squares and straightening
coefficients, Discrete Math. {\bf 128} (1994), 225--236.

\bibitem[Kly98]{kly}
A. A. Klyachko, Stable bundles, representation theory and Hermitian operators, Selecta Math. (N.S.) {\bf 4} (1998), 419--445.

\bibitem[Lan12]{lands}
J. M. Landsberg, Tensors: geometry and applications, Vol. 128 of Graduate Studies in Mathematics. American Mathematical Society, Providence, RI, 2012.

\bibitem[MFK94]{mumf}
D. Mumford, J. Fogarty, and F. Kirwan, Geometric invariant theory, 3rd ed., Springer-Verlag, 1994.

\bibitem[Onn97]{onn}
S. Onn, A colorful determinantal identity, a conjecture of Rota, and Latin squares, Amer. Math. Monthly {\bf 104} (1997), 156--159.

\bibitem[Pok20]{pok}
A. Pokrovskiy, Rota's Basis Conjecture holds asymptotically, arXiv:2008.06045 (2020).

\bibitem[Pol17]{pm}
D. H. J. Polymath, Rota’s basis conjecture online for matroids, unpublished manuscript, available at 
\url{https://www.overleaf.com/8773999gccdbdmfdgkm} (2017).

\bibitem[Rot98]{rot10}
G. C. Rota, Ten mathematics problems I will never solve, Mitteilungen der Deutschen Mathematiker-Vereinigung {\bf 6} (1998), 45--52.

\bibitem[ST16]{st}
W. Sawin and T. Tao, Notes on the "slice rank" of tensors, 2016, available at 
\url{https://terrytao.wordpress.com/2016/08/24/notes-on-the-slice-rank-of-tensors/}

\bibitem[Tao16]{tao}
T. Tao, A symmetric formulation of the Croot-Lev-Pach-Ellenberg-Gijswijt capset bound, 2016, available at \url{https://terrytao.wordpress.com/2016/05/18/a-symmetric-formulation-of-the-croot-lev-pach-ellenberg-gijswijt-capset-bound/}

\bibitem[Wil94]{wild}
M. Wild, On Rota’s problem about $n$ bases in a rank $n$ matroid, Adv. Math. {\bf 108} (1994), 336--345.

\end{thebibliography}
\end{document}